\newtheorem{theorem}{Theorem}[section]
\newtheorem{lemma}[theorem]{Lemma}
\newtheorem{conjecture}[theorem]{Conjecture}
\newtheorem{corollary}[theorem]{Corollary}
\newtheorem{claim}{Claim}[section]
\title{\bf \Large }
\date{ }
\title{\bf \Large  
Proof of Frankl's conjecture on cross-intersecting families\footnote{This paper was published on Journal of Combinatorial Theory, Series A 216 (2025) 106062. 
E-mail addresses: \url{wuyjmath@163.com} (Y. Wu), \url{fenglh@163.com} (L. Feng), \url{ytli0921@hnu.edu.cn} (Y. Li)}}
\author{
{\small  Yongjiang Wu,\ \  Lihua Feng, \ \ Yongtao Li\footnote{Corresponding author}}\\[2mm]
\small School of Mathematics and Statistics, HNP-LAMA, Central South University\\
 \small Changsha, Hunan, 410083, China\\ }
\begin{document}
\maketitle
\begin{abstract}
Two families $\mathcal{F}$ and $\mathcal{G}$ are called 
cross-intersecting if for every $F\in \mathcal{F}$ and $G\in \mathcal{G}$, the intersection $F\cap G$ is non-empty. For any positive integers $n$ and $k$, let $\binom{[n]}{k}$ denote the family of all $k$-element subsets of $\{1,2,\ldots,n\}$.
Let $t, s, k, n$ be non-negative integers with $k \geq s+1$ and $n \geq 2 k+t$. In 2016, Frankl proved that if $\mathcal{F} \subseteq\binom{[n]}{k+t}$ and $\mathcal{G} \subseteq\binom{[n]}{k}$ are cross-intersecting families, and $\mathcal{F}$ is $(t+1)$-intersecting and $|\mathcal{F}| 
\geq 1$, then $|\mathcal{F}|+|\mathcal{G}| \leq\binom{n}{k}-\binom{n-k-t}{k}+1$. Furthermore, Frankl conjectured that under an additional condition $\binom{[k+t+s]} {k+t}\subseteq\mathcal{F}$, the following inequality holds:       
$$
|\mathcal{F}|+|\mathcal{G}| \leq\binom{k+t+s}{k+t}+\binom{n}{k}-\sum_{i=0}^s\binom{k+t+s}{i}\binom{n-k-t-s}{k-i}.
$$
In this paper, we prove this conjecture. 
The key ingredient is to establish a theorem for 
cross-intersecting families with a restricted universe. Moreover, we derive an analogous result for this conjecture. 
\end{abstract}

{\bf AMS Classification}:  05C65; 05D05

{\bf Key words}:  Extremal set theory; Cross-intersecting families; Restricted universe

\section{Introduction}
For any two integers $a\leq b$, let  $[a, b]=\{a, a+1, \ldots, b\}$ and simply let $[n]=[1,n]$. Let $2^{[n]}$ denote the power set of $[n]$. For any $0 \leq k \leq n$, let $\binom{[n]}{k}$ denote the collection of all its $k$-element subsets. A family $\mathcal{F} \subseteq 2^{[n]}$ is called $k$-\textit{uniform} if $\mathcal{F} \subseteq \binom{[n]}{k}$. A family $\mathcal{F} \subseteq 2^{[n]}$ is called $t$-\textit{intersecting} if $|F\cap F^{\prime}|\geq t$
 for all $F, F^{\prime}\in \mathcal{F}$. If $t=1$, $\mathcal{F}$ is simply called \textit{intersecting}.

Erd\H{o}s, Ko and Rado \cite{E61} determined the maximum size of $k$-uniform intersecting families by proving that if $k\ge 2$, $n\ge 2k$ and 
 $\mathcal{F} \subseteq\binom{[n]}{k}$ is an intersecting family, then 
$$|\mathcal{F}|\leq\binom{n-1}{k-1}. $$
For $n > 2 k$, the equality holds if and only if $\mathcal{F}=\big\{F\in \binom{[n]}{k}: x\in F\big\}$ for some $x \in [n]$. 
Such a family is called a \textit{full star}. We shall mention that the case for $k=0$ or $k=1$ is trivial.
The Erd\H{o}s--Ko--Rado theorem is widely regarded as the cornerstone of extremal set theory and has many interesting applications and generalizations; see \cite{F87,H67,HZ2017} for related results and \cite{FT16,Ellis2021} for comprehensive surveys. A well-known generalization was given by Hilton and Milner \cite{H67} in 1967,  who showed that if $k \geq 2$, $n \geq 2 k$ and $\mathcal{F} \subseteq\binom{[n]}{k}$ is an intersecting family that is not a subfamily of a full star (i.e., non-trivial), then
$$
|\mathcal{F}|\leq\binom{n-1}{k-1}-\binom{n-k-1}{k-1}+1. 
$$
This improves greatly the bound in Erd\H{o}s--Ko--Rado's theorem. Moreover, Hilton and Milner
also provided a characterization of the non-trivial intersecting families with maximum size.
There are various intersection theorems that  strengthen Hilton--Milner's theorem in the literature; see \cite{H17,H24,KM17,K18,KZ2018}.

In 2013, 
Li, Chen, Huang, and Lih \cite{L13} proposed the following problem: let $n, m, k$ be positive integers with $n \geq2k>m\geq k$. Let $h(n,m,k)$ denote $\text{max}\{|\mathcal{F}| \}$, where $ \mathcal{F}\subseteq \binom{[n]}{k}$ runs over all intersecting families with $\binom{[m]}{k}\subseteq\mathcal{F}$. In other words, $h(n,m,k)$ is the maximum size of 
an intersecting family of $k$-sets of $[n]$ with clique number at least $m$. 
In the case $m=k$ and $k+1$, the Erd\H{o}s--Ko--Rado theorem and the Hilton--Milner theorem imply 
\[ h(n,k,k)= \binom{n-1 }{k-1} \] 
and 
\[ h(n,k+1,k)= \binom{n-1 }{ k-1} - \binom{n-k-1 }{k-1} +1. \]
Moreover, Li, Chen, Huang, and Lih \cite{L13} determined $h(n,m,k)$ for sufficiently large $n$ depending on $k$ or for $m=2k-1, 2k-2, 2k-3$. In 2020, Frankl \cite{F20} completely solved this problem by showing 
\[ h(n,m,k) = {m \choose k} + \sum_{i=m-k+1}^{k-1} {m-1 \choose i-1} {n-m \choose k-i}.  \] 
Two families  $\mathcal{F},\mathcal{G}\subseteq 2^{[n]}$ are  called \textit{cross-intersecting} if $|F\cap G|\geq 1$
 for any $F\in \mathcal{F}$ and $G\in \mathcal{G}$. 
 Cross-intersecting families are a variation of intersecting families. 
 In 1967, Hilton and Milner \cite{H67} proved that if $n \geq 2 k$ and  $\mathcal{F},\mathcal{G} \subseteq\binom{[n]}{k}$ are non-empty cross-intersecting families,  then
\begin{align}\label{f11}
    |\mathcal{F}|+|\mathcal{G}| \leq\binom{n}{k}-\binom{n-k}{k}+1. 
\end{align}
This result initiated the study of finding the maximum of the sum of sizes of cross-intersecting families.  
In 1992, Frankl and Tokushige \cite{FT92} established the following extension: let $ 2\le \ell \le k $ and $n\geq k+\ell$. If $\mathcal{F} \subseteq\binom{[n]}{k}$ and $\mathcal{G} \subseteq\binom{[n]}{\ell}$ are non-empty cross-intersecting families, then 
\begin{align}\label{f12}
     |\mathcal{F}|+|\mathcal{G}| \leq\binom{n}{k}-\binom{n-\ell}{k}+1. 
\end{align}
This bound can be attained by taking $\mathcal{G}$ as a single set. 
There are many other generalizations for cross-intersecting families in the past years; see, e.g.,  \cite{CKLT2023,CLLW2022,Fra2024,FT92,F98,FW2023,F24,Huang2019, SFQ2022,WZ2013}. 
In this paper, we are interested in the following generalization due to 
Frankl \cite{F16}.

\begin{theorem}[Frankl \cite{F16}] 
\label{F16}
 Let $t\ge 0, k \geq 1$ and $n \geq 2 k+t$ be  integers.  Let $\mathcal{F} \subseteq\binom{[n]}{k+t}$ and $\mathcal{G} \subseteq\binom{[n]}{k}$ be  cross-intersecting families.
\begin{itemize}
\item[\rm (i)] 
If $\mathcal{F}$ is $t$-intersecting, then
$$
|\mathcal{F}|+|\mathcal{G}| \leq\binom{n}{k}.
$$

\item[\rm (ii)] 
If $\mathcal{F}$ is $(t+1)$-intersecting and $|\mathcal{F}| \geq 1$, then
$$
|\mathcal{F}|+|\mathcal{G}| \leq\binom{n}{k}-\binom{n-k-t}{k}+1.
$$
\end{itemize}
\end{theorem}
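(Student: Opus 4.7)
The plan is to prove Theorem~\ref{F16} by the shifting technique combined with induction on $n$. For part (i), I would first apply the $(i,j)$-shift simultaneously to $\mathcal{F}$ and $\mathcal{G}$ for every $1 \le i < j \le n$. A routine verification shows that this operation preserves the sizes of both families, the cross-intersecting property between them, and the $t$-intersecting property of $\mathcal{F}$, so we may assume both families are left-compressed.

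Next, I would induct on $n$, with base case $n = 2k+t$. In the base case, the complement map $F \mapsto [n]\setminus F$ is an injection from $\mathcal{F}$ into $\binom{[n]}{k}$ (since $|[n]\setminus F| = k$), and cross-intersection forces this image to be disjoint from $\mathcal{G}$, giving $|\mathcal{F}| + |\mathcal{G}| \le \binom{n}{k}$. For $n > 2k+t$, I would decompose each family by membership of the element $n$: set $\mathcal{F}_0 = \{F \in \mathcal{F} : n \notin F\}$, $\mathcal{F}_1 = \{F \setminus \{n\} : F \in \mathcal{F},\ n \in F\}$, and analogously $\mathcal{G}_0, \mathcal{G}_1$. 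The left-compressed hypothesis ensures that the residual pairs on $[n-1]$ inherit cross-intersecting structure with parameters shifted appropriately, and the $t$-intersecting property of $\mathcal{F}$ descends to the traces. Applying the inductive hypothesis to each sub-pair then recovers the bound.

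For part (ii), the approach is similar in spirit but exploits the stronger $(t+1)$-intersecting hypothesis. The key new input is that for any fixed $F_0 \in \mathcal{F}$, cross-intersection forbids $\mathcal{G}$ from containing any $k$-set disjoint from $F_0$; there are exactly $\binom{n-k-t}{k}$ such forbidden $k$-sets, giving $|\mathcal{G}| \le \binom{n}{k} - \binom{n-k-t}{k}$ whenever $\mathcal{F}$ is non-empty. To promote this to the bound with the extra ``$+1$'', I would construct an injection $\phi : \mathcal{F} \setminus \{F_0\} \to \binom{[n]}{k} \setminus \bigl(\mathcal{G} \cup \binom{[n]\setminus F_0}{k}\bigr)$ satisfying $\phi(F) \cap F = \emptyset$ and $\phi(F) \cap F_0 \neq \emptyset$. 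The $(t+1)$-intersecting condition (any two $F, F' \in \mathcal{F}$ satisfy $|F \cap F'| \ge t+1$) provides the room in $[n]\setminus F$ outside of $[n]\setminus F_0$ required for Hall's condition.

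The main obstacle I expect is the inductive step in part (i): after stripping off the element $n$, one must simultaneously handle four sub-families of four different uniformities ($k+t$, $k+t-1$, $k$, $k-1$), verify each residual cross-intersecting condition, and confirm the $t$-intersecting property descends via the left-compressed structure to traces on $[n-1]$. For part (ii), the secondary challenge is verifying Hall's condition for $\phi$: one must show that for every subfamily $\mathcal{F}' \subseteq \mathcal{F} \setminus \{F_0\}$, the union $\bigcup_{F \in \mathcal{F}'}\bigl(\binom{[n]\setminus F}{k} \setminus \binom{[n]\setminus F_0}{k}\bigr)$ has size at least $|\mathcal{F}'|$, which requires careful exploitation of the bound $|F \cup F_0| \le 2k+t-1$ coming from the $(t+1)$-intersecting hypothesis, together with $n \ge 2k+t$.
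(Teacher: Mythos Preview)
First, note that the paper does not prove Theorem~\ref{F16}; it is quoted from Frankl~\cite{F16} and used as a black box in the proofs of Theorem~\ref{main2} and Conjecture~\ref{co1}. So there is no proof in the paper to compare against directly, though the paper's inductive machinery (shifting, then induction on $n$ via the decomposition $|\mathcal{F}| = |\mathcal{F}(\bar n)| + |\mathcal{F}(n)|$ together with Lemma~\ref{S4}) would readily prove both parts along the lines you sketch for part~(i). Your plan for part~(i) is standard and sound.

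Your plan for part~(ii), however, is circular. You propose to build an injection $\phi\colon \mathcal{F}\setminus\{F_0\} \to \binom{[n]}{k}\setminus\bigl(\mathcal{G}\cup\binom{[n]\setminus F_0}{k}\bigr)$ via Hall's theorem, where $F\in\mathcal{F}\setminus\{F_0\}$ has neighborhood $N(F) = \bigl\{H \in \binom{[n]}{k}: H\cap F = \emptyset,\ H\cap F_0 \neq \emptyset\bigr\}$. For any $\mathcal{F}'\subseteq \mathcal{F}\setminus\{F_0\}$, set $\mathcal{G}''(\mathcal{F}') = \bigl\{H : H\cap F\neq\emptyset \text{ for all } F\in\mathcal{F}'\cup\{F_0\}\bigr\}$; then
\[
\Bigl|\bigcup_{F\in\mathcal{F}'} N(F)\Bigr| \;=\; \binom{n}{k}-\binom{n-k-t}{k} - |\mathcal{G}''(\mathcal{F}')|,
\]
so Hall's condition is precisely $|\mathcal{F}'| + |\mathcal{G}''(\mathcal{F}')| \le \binom{n}{k}-\binom{n-k-t}{k}$. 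But $\mathcal{F}'\cup\{F_0\}$ is $(t+1)$-intersecting and non-empty, and $\mathcal{G}''(\mathcal{F}')$ is the \emph{maximal} family cross-intersecting with it, so this inequality is exactly part~(ii) applied to the pair $\bigl(\mathcal{F}'\cup\{F_0\},\, \mathcal{G}''(\mathcal{F}')\bigr)$. Taking $\mathcal{F}' = \mathcal{F}\setminus\{F_0\}$ recovers the very statement you are trying to prove, in its hardest instance. The bound $|F\cup F_0|\le 2k+t-1$ you invoke handles only the case $|\mathcal{F}'|=1$; it gives no leverage for larger $\mathcal{F}'$. To make part~(ii) work you need an independent argument---for example, shift and run the same induction on $n$ as in part~(i), tracking the condition $|\mathcal{F}|\ge 1$ through the $\mathcal{F}(\bar n)$ branch---rather than the injection route.
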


The Hilton lemma \cite{H76}
states that for cross-intersecting families 
$\mathcal{F}$ and $\mathcal{G}$, 
we can always assume that $\mathcal{F}$ and $\mathcal{G}$ are the first initial segments in the lexicographic order; 
see \cite{FK2017} for a detailed proof. 
Therefore,  the case $t=0$ in Theorem \ref{F16} implies Hilton--Milner's result (\ref{f11}).
Moreover, the case $t=1$ in Theorem \ref{F16} can be applied to a theorem of Frankl \cite{F171} to establish a stability  result of the Katona theorem \cite{K64}, 
and it can also imply a recent result of 
Bulavka and Woodroofe \cite{BW2024}. 
Furthermore, the third author and Wu \cite{LW2024} sharpened Theorem \ref{F16} in the case $t=1$.
Subsequently, Wu \cite{W23} provided a generalization under the constraint $|\mathcal{F}|\geq 2$. 
 A more general extension was recently established by the current authors \cite{Wu-third} under $|\mathcal{F}|\ge r$ for every $r\le n-k-t +1$.
Let us mention that Theorem \ref{F16} and its generalizations are useful on the study of stability results for families with restricted intersection or union \cite{Wu-first,Wu-second}.

\section{Main results} 

In the same paper \cite{F16}, Frankl proposed the following conjecture.

\begin{conjecture}[Frankl \cite{F16}] 
\label{co1}
 Let $t,s\ge 0, k \geq s+1$ and $n \geq 2 k+t$ be integers.  Let $\mathcal{F} \subseteq\binom{[n]}{k+t}$ and $\mathcal{G} \subseteq\binom{[n]}{k}$ be  cross-intersecting families. If $\mathcal{F}$ is $(t+1)$-intersecting and $\binom{[k+t+s]}{k+t}\subseteq\mathcal{F}$, then
$$
|\mathcal{F}|+|\mathcal{G}| \leq\binom{k+t+s}{k+t}+\binom{n}{k}-\sum_{i=0}^s\binom{k+t+s}{i}\binom{n-k-t-s}{k-i}.
$$
\end{conjecture}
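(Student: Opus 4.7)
My plan begins by recasting the target inequality. Since every $(k+t)$-subset of $[k+t+s]$ lies in $\mathcal{F}$, cross-intersection forces every $G\in\mathcal{G}$ to meet each such subset, i.e.\ $|G\cap[k+t+s]|\ge s+1$. Thus $\mathcal{G}$ is contained in
\[
\mathcal{G}^{\star}:=\Bigl\{G\in\binom{[n]}{k}:|G\cap[k+t+s]|\ge s+1\Bigr\},
\]
whose size is $\binom{n}{k}-\sum_{i=0}^{s}\binom{k+t+s}{i}\binom{n-k-t-s}{k-i}$ by Vandermonde. The conjecture is therefore equivalent to
\[
\Bigl|\mathcal{F}\setminus\binom{[k+t+s]}{k+t}\Bigr|\ \le\ \Bigl|\mathcal{G}^{\star}\setminus\mathcal{G}\Bigr|,
\]
with equality achieved by $\bigl(\binom{[k+t+s]}{k+t},\mathcal{G}^{\star}\bigr)$. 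Observe that the case $s=0$ is exactly Theorem~\ref{F16}(ii), which furnishes a base for an induction on $s$.

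A useful structural fact follows from the $(t+1)$-intersection hypothesis. For $F\in\mathcal{F}$ and any $F'\in\binom{[k+t+s]}{k+t}$ one has $F\cap F'=F_{1}\cap F'$ with $F_{1}:=F\cap[k+t+s]$, and minimising $|F_{1}\cap F'|$ over $F'\in\binom{[k+t+s]}{k+t}$ forces $|F_{1}|\ge s+t+1$. Thus $\mathcal{F}$ stratifies as $\mathcal{F}=\bigsqcup_{j=s+t+1}^{k+t}\mathcal{F}_{j}$ with $\mathcal{F}_{j}=\{F\in\mathcal{F}:|F\cap[k+t+s]|=j\}$ and $\mathcal{F}_{k+t}=\binom{[k+t+s]}{k+t}$. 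I would then reduce to shifted $\mathcal{F}$ and $\mathcal{G}$; the shifting operation preserves cross-intersection, the $(t+1)$-intersecting property, and the anchoring condition $\binom{[k+t+s]}{k+t}\subseteq\mathcal{F}$, because that family is itself shifted and the shifts $S_{ij}$ with $j>k+t+s$ leave its members pointwise unchanged.

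The core of the argument, as flagged in the abstract, is a \emph{restricted-universe} cross-intersection theorem bounding $|\mathcal{A}|+|\mathcal{B}|$ for cross-intersecting $\mathcal{A}\subseteq\binom{[m]}{k+t}$ and $\mathcal{B}\subseteq\binom{[n]}{k}$ with $m<n$ under an appropriate $(t+1)$-intersection hypothesis on $\mathcal{A}$. Applied with $m=k+t+s$ to the residual family $\mathcal{F}\setminus\binom{[k+t+s]}{k+t}$ (or, slicewise, to the traces $\{F\cap[k+t+s]:F\in\mathcal{F}_{j}\}$ paired with the corresponding traces of $\mathcal{G}$ on the restricted universe $[k+t+s]$), this theorem should produce level-by-level inequalities that aggregate to the conjecture. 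The main obstacle I anticipate is the exact formulation of this restricted-universe theorem so that its extremal profile telescopes precisely to the right-hand side of the conjecture; I expect its proof to proceed by an induction on $n-m$ (or on $|\mathcal{F}|$), bootstrapped from Theorem~\ref{F16}(ii), which is the $m=n$ case.
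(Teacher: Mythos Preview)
Your structural observations are exactly right and match the paper: every $G\in\mathcal{G}$ satisfies $|G\cap[k+t+s]|\ge s+1$, every $F\in\mathcal{F}$ satisfies $|F\cap[k+t+s]|\ge t+s+1$, shifting preserves all hypotheses, and the target inequality is equivalent to $|\mathcal{F}\setminus\binom{[k+t+s]}{k+t}|\le|\mathcal{G}^{\star}\setminus\mathcal{G}|$. You also correctly identify that the engine must be a restricted-universe cross-intersection bound.

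The gap is in how you formulate and deploy that engine. Your version takes $\mathcal{A}\subseteq\binom{[m]}{k+t}$, i.e.\ the $(k+t)$-family lives \emph{inside} $[m]$; you then propose to apply it with $m=k+t+s$ to $\mathcal{F}\setminus\binom{[k+t+s]}{k+t}$. But that residual family consists precisely of the sets \emph{not} contained in $[m]$, so it does not fit your hypothesis at all. The slicewise alternative via traces $F\cap[k+t+s]$ does not obviously preserve cross-intersection with $\mathcal{G}$ and changes the uniformity, so it is not a workable substitute without substantial further argument. Your proposed inductions on $s$ or on $n-m$ do not by themselves suggest a decomposition that separates the anchored part $\binom{[k+t+s]}{k+t}$ from the rest.

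The paper's restricted-universe theorem (Theorem~\ref{main2}) is formulated differently, and this difference is the whole point: both families stay in $\binom{[n]}{\cdot}$ but are required to satisfy $|A\cap[m]|\ge t+s+1$ and $|B\cap[m]|\ge s+1$, and one only assumes $\mathcal{F}$ is $t$-intersecting (not $(t+1)$). Under these hypotheses one gets $|\mathcal{F}|+|\mathcal{G}|\le|\mathcal{G}^{\star}|$. The conjecture is then proved by induction on $n$ via the shifted split $\mathcal{H}=\mathcal{H}(\bar n)\sqcup\mathcal{H}(n)$: the pair $(\mathcal{F}(\bar n),\mathcal{G}(\bar n))$ still contains $\binom{[k+t+s]}{k+t}$, so the induction hypothesis applies; the pair $(\mathcal{F}(n),\mathcal{G}(n))$ \emph{loses} the anchoring but retains the intersection-with-$[m]$ constraints and (at least) $t$-intersection, which is exactly what Theorem~\ref{main2} needs with $m=k+t+s>k-1+t+s$. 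Adding the two bounds telescopes to the conjecture. So the missing idea is to state the auxiliary theorem with intersection-with-$[m]$ constraints on \emph{both} families (rather than confining $\mathcal{A}$ to $[m]$), to weaken the intersection hypothesis on $\mathcal{F}$ to $t$, and to use the $\bar n/n$ link decomposition as the inductive mechanism.
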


Note that setting $s=0$, we get back to (ii) in  Theorem \ref{F16}. The bound in Conjecture \ref{co1} is the best possible as demonstrated by 
the following families:
$$
\mathcal{F}_0=\binom{[k+t+s]}{k+t}~~\text{and}~~ \mathcal{G}_0=\left\{G\in \binom{[n]}{k}: |G\cap[k+t+s]|\geq s+1\right\}.
$$
Frankl \cite{F16} mentioned that he can prove Conjecture \ref{co1} for some small values of $k$ and also for $n> ck^2$, and he pointed out that proving it in the full range appears to be difficult.

In this paper, we give a complete proof of  Conjecture \ref{co1} by establishing a theorem for cross-intersecting families with a restricted universe. 
Our proof is motivated by the ideas of Frankl \cite{F20}. 
The second main theorem of this paper gives a variant of  Conjecture \ref{co1}.

\begin{theorem}\label{main6}
Let $s\geq 0$, $k \geq \ell\geq s+1$ and $n \geq k+\ell$ be integers.   Let $\mathcal{F} \subseteq\binom{[n]}{k}$ and $\mathcal{G} \subseteq\binom{[n]}{\ell}$ be  cross-intersecting families.
Suppose that $\mathcal{G}$ is intersecting and  $\binom{[\ell+s]}{\ell}\subseteq\mathcal{G}$. Then
$$
|\mathcal{F}|+|\mathcal{G}| \leq\binom{\ell+s}{\ell}+\binom{n}{k}-\sum_{i=0}^s\binom{\ell+s}{i}\binom{n-\ell-s}{k-i}.
$$
\end{theorem}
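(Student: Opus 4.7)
Write $m:=\ell+s$. The pair $(\mathcal G_0,\mathcal F_0):=(\binom{[m]}{\ell},\,\{F\in\binom{[n]}{k}:|F\cap[m]|\ge s+1\})$ attains the claimed bound, so sharpness is clear. My plan for the upper bound is, first, to apply simultaneous left-shifts $s_{ij}$ ($i<j$) to the pair $(\mathcal F,\mathcal G)$. Shifting preserves cross-intersection, the intersecting property of $\mathcal G$, and (since every member of $\binom{[m]}{\ell}$ lies inside $[m]$) also the hypothesis $\binom{[m]}{\ell}\subseteq\mathcal G$; hence I may assume $(\mathcal F,\mathcal G)$ is shifted. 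The intersecting property of $\mathcal G$ together with $\binom{[m]}{\ell}\subseteq\mathcal G$ then forces $|G\cap[m]|\ge s+1$ for every $G\in\mathcal G$, and cross-intersection with $\binom{[m]}{\ell}$ forces $|F\cap[m]|\ge s+1$ for every $F\in\mathcal F$. In particular $\mathcal F\subseteq\mathcal F_0$, which already yields one half of the desired inequality; writing $\mathcal G':=\mathcal G\setminus\binom{[m]}{\ell}$, what remains is the equivalent statement
\[
|\mathcal G'|\le |\mathcal F_0\setminus\mathcal F|.
\]

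I may further assume that $\mathcal F$ is maximal for the given $\mathcal G$, i.e.\ $\mathcal F=\{F\in\binom{[n]}{k}:F\cap G\neq\emptyset\ \text{for all}\ G\in\mathcal G\}$. Because each $F\in\mathcal F_0$ meets every $G\in\binom{[m]}{\ell}$ automatically, one has $\mathcal F_0\setminus\mathcal F=\{F\in\mathcal F_0:F\cap G=\emptyset\ \text{for some}\ G\in\mathcal G'\}$, and it suffices to construct an injection $\varphi:\mathcal G'\hookrightarrow\mathcal F_0\setminus\mathcal F$ with $\varphi(G)\cap G=\emptyset$. For $G\in\mathcal G'$ write $G=A\sqcup B$ with $A=G\cap[m]$, $|A|=a\in\{s+1,\ldots,\ell-1\}$ and $B=G\cap[m+1,n]$, $|B|=\ell-a\ge 1$. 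A natural candidate is $\varphi(G)=([m]\setminus A)\cup S_G$ for some $S_G\subseteq[m+1,n]\setminus B$ of size $k-m+a$; any such $\varphi(G)$ lies in $\mathcal F_0$ (since $|[m]\setminus A|=m-a\ge s+1$) and is disjoint from $G$. For injectivity and the correct global count, I would freeze the outside coordinate $B$ and appeal to the restricted-universe cross-intersection theorem that this paper develops: the fibers $\mathcal A(B):=\{A:A\cup B\in\mathcal G'\}$ form a shifted cross-intersecting system inside the restricted universe $[m]$, and the restricted-universe bound then provides the fiberwise inequality needed so that summing over $B$ yields $|\mathcal G'|\le|\mathcal F_0\setminus\mathcal F|$.

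The main obstacle is precisely this restricted-universe step. When two sets $G,G'\in\mathcal G'$ share the inside part $A$ but have different outside parts $B,B'$, the $[m]$-coordinate of $\varphi(G)$ and $\varphi(G')$ coincides and injectivity has to come entirely from the coordinate $S_G$, whose size $k-m+a$ can be small compared with $|[m+1,n]\setminus B|$; at the same time the neighborhoods $\{F\in\mathcal F_0:F\cap G=\emptyset\}$ for different $G\in\mathcal G'$ can overlap heavily, so a crude Hall or inclusion--exclusion argument badly overcounts. Following Frankl's strategy in \cite{F20}, the restricted-universe theorem would be established by pinning down the extremal configuration inside $[m]$ (essentially $\binom{[m]}{\ell}$ itself) and showing that any deviation from it inside $[m]$ creates a matching amount of free room in $\mathcal F_0\setminus\mathcal F$; this is what lets the injection $\varphi$ be completed, giving $|\mathcal F|+|\mathcal G|\le|\mathcal F_0|+\binom{m}{\ell}$, which is the claimed bound.
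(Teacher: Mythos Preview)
Your proposal has a genuine gap at the decisive step. You correctly reduce to showing $|\mathcal{G}'| \le |\mathcal{F}_0\setminus\mathcal{F}|$ and propose $\varphi(G)=([m]\setminus A)\cup S_G$, but you never actually construct $S_G$; you only note that \emph{some} admissible choice exists for each $G$ individually. The obstacle you yourself name---that when $G=A\cup B$ and $G'=A\cup B'$ share the same inside part $A$, the images can only be separated through the outside coordinate $S_G$---is real and is not resolved by your final paragraph. The appeal to ``the restricted-universe cross-intersection theorem that this paper develops'' is vague and essentially circular (Theorem~\ref{main6} \emph{is} one of those statements), and you switch without explanation from fibering over $A$ (the sets $\mathcal{B}_A$) to fibering over $B$ (the sets $\mathcal{A}(B)$), with no indication of which bound is to be applied to the fibers or why the fiberwise inequalities sum correctly. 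As it stands, no injection is defined and no inequality is proved.

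The paper's argument avoids injections altogether. After the same preliminary reductions (shifting; every $F,G$ meets $[m]$ in at least $s+1$ points), it proceeds by induction on $n$, with base cases $n=k+\ell$, $\ell=s+1$, and $k=\ell$ (the last reducing to Conjecture~\ref{co1} at $t=0$). In the induction step one splits off the element $n$: the pair $(\mathcal{F}(\bar n),\mathcal{G}(\bar n))$ inherits all hypotheses on $[n-1]$ and is handled by the induction hypothesis, while $(\mathcal{F}(n),\mathcal{G}(n))$ is cross-intersecting by Lemma~\ref{S4} and lives in the restricted universe $|A^*\cap[m]|\ge s+1$, $|B^*\cap[m]|\ge s+1$, but \emph{without} the conditions $\binom{[m]}{\ell}\subseteq\mathcal{G}$ or $\mathcal{G}$ intersecting. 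That weaker situation is exactly Theorem~\ref{main4} (applied with parameters $n-1,k-1,\ell-1$ and $m=\ell+s>(\ell-1)+s$), which is established separately by its own induction. Summing the two bounds via Pascal's rule completes the proof. If you wish to rescue the injection route, you would need a concrete matching lemma in the Kneser-type bipartite graph between $(\ell-a)$-subsets and $(k-m+a)$-subsets of $[m+1,n]$, restricted to the shifted families $\mathcal{B}_A$; that is not visibly easier than the theorem itself.
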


For Theorem \ref{main6}, setting $s=0$ we get the bound in (\ref{f12}) with additional assumption that $\mathcal{G}$ is intersecting. 
Here, we can drop the constraint that $\mathcal{F}$ is non-empty. 
In addition, setting $k=\ell$, we can get part (ii) in Theorem \ref{F16} for $t=0$. So we can view Theorem \ref{main6} as a  generalization of Hilton--Milner's result (\ref{f11}). 
As is well-known, for a family $\mathcal{G} \subseteq\binom{[n]}{\ell}$, if $\mathcal{G}$ is shifted (see the definition in next section or \cite{G16}) and non-trivial, then $[2,\ell+1]\in \mathcal{G}$ and $\binom{[\ell+1]}{\ell}\subseteq\mathcal{G}$. 
Therefore, Theorem \ref{main6} implies the following corollary.

\begin{corollary}\label{cm6}
Let $k \geq \ell\geq 1$ and $n \geq k+\ell$ be integers.   Let $\mathcal{F} \subseteq\binom{[n]}{k}$ and $\mathcal{G} \subseteq\binom{[n]}{\ell}$ be cross-intersecting families.
Suppose that $\mathcal{G}$ is shifted, intersecting and non-trivial. Then
$$
|\mathcal{F}|+|\mathcal{G}| \leq \ell+1+\binom{n}{k}-\sum_{i=0}^1\binom{\ell+1}{i}\binom{n-\ell-1}{k-i}.
$$
\end{corollary}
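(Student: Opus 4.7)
The plan is to derive Corollary~\ref{cm6} as an immediate specialization of Theorem~\ref{main6} at $s=1$. The only substantive input needed beyond Theorem~\ref{main6} is the structural fact, recalled in the paragraph preceding the corollary, that a shifted non-trivial intersecting family $\mathcal{G} \subseteq \binom{[n]}{\ell}$ must contain $\binom{[\ell+1]}{\ell}$ in full.

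First I would verify that structural fact, which is standard but worth spelling out. Non-triviality supplies some $G \in \mathcal{G}$ with $1 \notin G$; writing $G = \{a_1 < \cdots < a_\ell\}$ forces $a_i \geq i+1$ for every $i$. The usual consequence of shiftedness, that any $\ell$-subset of $[n]$ which is coordinatewise smaller than an element of $\mathcal{G}$ is itself in $\mathcal{G}$, then places $[2,\ell+1] \in \mathcal{G}$. Applying the shift $S_{1,j}$ to $[2,\ell+1]$ for each $j \in [2,\ell+1]$ produces every remaining $\ell$-subset of $[\ell+1]$, giving $\binom{[\ell+1]}{\ell} \subseteq \mathcal{G}$.

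With that observation in hand, I would check that the hypotheses of Theorem~\ref{main6} hold with $s=1$: the family $\mathcal{G}$ is intersecting, $\binom{[\ell+s]}{\ell}=\binom{[\ell+1]}{\ell} \subseteq \mathcal{G}$, and the numerical constraint $\ell \geq s+1 = 2$ may be assumed (the case $\ell=1$ is vacuous since no non-trivial intersecting family can exist in $\binom{[n]}{1}$). Theorem~\ref{main6} then gives
$$|\mathcal{F}|+|\mathcal{G}|\leq \binom{\ell+1}{\ell}+\binom{n}{k}-\sum_{i=0}^{1}\binom{\ell+1}{i}\binom{n-\ell-1}{k-i},$$
and substituting $\binom{\ell+1}{\ell}=\ell+1$ recovers the stated bound exactly. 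Since the deep work is absorbed in the proof of Theorem~\ref{main6}, there is no genuine obstacle in the derivation itself; the only care required is in confirming the shifted structure lemma and in noting that the degenerate case $\ell=1$ has no content.
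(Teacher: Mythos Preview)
Your proposal is correct and follows exactly the paper's approach: the paper derives the corollary from Theorem~\ref{main6} with $s=1$ via the observation (stated just before the corollary) that a shifted non-trivial family $\mathcal{G}\subseteq\binom{[n]}{\ell}$ contains $\binom{[\ell+1]}{\ell}$. You merely add helpful detail the paper omits---the verification of that structural fact and the remark that $\ell=1$ is vacuous---so there is nothing to correct.
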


 Corollary \ref{cm6} could be viewed as a variant of \cite[Theorems 1.5 and 3.5]{Fra2024}  with additional assumption that $\mathcal{G}$ is intersecting and without the constraint that $\mathcal{F}$ is shifted and non-trivial.

\section{Proof of Conjecture \ref{co1}}   

\label{se3}

Let us introduce the shifting operation 
due to Erd\H{o}s, Ko and Rado \cite{E61}. 
Let  $\mathcal{F} \subseteq 2^{[n]}$ be a family and $1 \leq i<j \leq n$. The \textit{shifting operator} $s_{i, j}$ on $\mathcal{F}$ is defined as follows:
$$s_{i, j}(\mathcal{F})=\left\{s_{i, j}(F): F \in \mathcal{F}\right\},$$
where
$$
s_{i, j}(F)= \begin{cases}(F \backslash\{j\}) \cup\{i\} & \text { if } j \in F, i \notin F \text { and } (F \backslash\{j\}) \cup\{i\} \notin \mathcal{F}, \\ F & \text { otherwise. }\end{cases}
$$
Obviously, we have $\left|s_{i, j}(F)\right|=|F|$ and $\left|s_{i, j}(\mathcal{F})\right|=|\mathcal{F}|$.  A frequently used property  is that $s_{i, j}$ maintains the $t$-intersecting property of a family. 
A family $\mathcal{F} \subseteq 2^{[n]}$ is called \textit{shifted} if for all $ F \in \mathcal{F}$,  $i<j$ with $i\notin F$ and $j\in F$, then $(F \backslash\{j\}) \cup\{i\} \in \mathcal{F}$.  
It is well-known that every intersecting family can be transformed to a shifted intersecting family by applying shifting operations repeatedly.  
There are many nice properties on shifted families. For example, 
if $\mathcal{F}$ is a shifted family 
and $\{a_1,\ldots,a_k\}\in \mathcal{F}$ with $a_1<\cdots<a_k$, then for any set  $\{b_1,\ldots,b_k\}$ with $b_1<\cdots<b_k$ and $b_i\leq a_i$ for each $i\in[1,k]$, we have $\{b_1,\ldots,b_k\}\in \mathcal{F}$.

Let us recall the following well-known result.

\begin{lemma} \cite[Fact 2.1]{G23} \label{S3}
 Let $\mathcal{F} \subseteq 2^{[n]}$ and $\mathcal{G} \subseteq 2^{[n]}$ be  cross-intersecting, and $\mathcal{F}$ be $t$-intersecting.
Then $s_{i, j}(\mathcal{F})$ and $s_{i, j}(\mathcal{G})$ are also cross-intersecting, and $s_{i, j}(\mathcal{F})$ is $t$-intersecting.
\end{lemma}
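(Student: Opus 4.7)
The plan is to prove both assertions by a case analysis that rests on two structural facts about the shifting operator $s_{i,j}$. First, for any family $\mathcal{H}$ and any $H' \in s_{i,j}(\mathcal{H})$, either $H'$ is an unchanged member of $\mathcal{H}$, or $H' = (H \setminus \{j\}) \cup \{i\}$ for some $H \in \mathcal{H}$ with $j \in H$, $i \notin H$, and $(H \setminus \{j\}) \cup \{i\} \notin \mathcal{H}$; in particular, if $i \notin H'$ then $H' \in \mathcal{H}$. The second, pivotal, fact is that if $H' \in s_{i,j}(\mathcal{H})$ satisfies $j \in H'$ and $i \notin H'$, then $H'$ was fixed only because its shift was blocked, which forces $(H' \setminus \{j\}) \cup \{i\} \in \mathcal{H}$ as well.

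For cross-intersection, I will pick arbitrary $F \in s_{i,j}(\mathcal{F})$ and $G \in s_{i,j}(\mathcal{G})$. If $i \in F \cap G$ there is nothing to do, and if $i \notin F \cup G$ the first fact puts both sets in their original families and the hypothesis applies. By symmetry between $\mathcal{F}$ and $\mathcal{G}$, the only remaining case is $i \in F$ and $i \notin G$, in which $G \in \mathcal{G}$. If $F \in \mathcal{F}$ we are done; otherwise $F = (F^* \setminus \{j\}) \cup \{i\}$ with $F^* \in \mathcal{F}$, $j \in F^*$, $i \notin F^*$. By cross-intersection $F^* \cap G \neq \emptyset$: if this intersection contains some element other than $j$, that element already lies in $F \cap G$; if instead $F^* \cap G = \{j\}$, then $j \in G$ and the second fact supplies $G^* := (G \setminus \{j\}) \cup \{i\} \in \mathcal{G}$, but then $F^* \cap G^* = \emptyset$, contradicting the original cross-intersecting hypothesis.

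For $t$-intersection of $s_{i,j}(\mathcal{F})$, I will pick $F_1, F_2 \in s_{i,j}(\mathcal{F})$ and branch on the membership of $i$. The case $i \notin F_1 \cup F_2$ puts both in $\mathcal{F}$ and is immediate. In the case $i \in F_1 \cap F_2$, writing each shifted $F_\ell$ as $(F_\ell^* \setminus \{j\}) \cup \{i\}$, the element $i$ is a fresh common contribution which compensates for the possible loss of $j$ from the intersection of the preimages, and a short direct calculation gives $|F_1 \cap F_2| \geq t$. The substantive case, by symmetry, is $i \in F_1$ and $i \notin F_2$: then $F_2 \in \mathcal{F}$, and if $F_1 \in \mathcal{F}$ we are done; otherwise $F_1 = (F_1^* \setminus \{j\}) \cup \{i\}$. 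If $j \notin F_2$, a direct computation shows $F_1 \cap F_2 = F_1^* \cap F_2$, which already has size $\geq t$. If $j \in F_2$, the second fact yields $F_2^* := (F_2 \setminus \{j\}) \cup \{i\} \in \mathcal{F}$, and the inclusion $F_1^* \cap F_2^* \subseteq F_1 \cap F_2$ (any common element is neither $i$ nor $j$, hence lies in $F_1^* \setminus \{j\} \subseteq F_1$ and in $F_2 \setminus \{j\} \subseteq F_2$) combined with $|F_1^* \cap F_2^*| \geq t$ completes the argument.

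I do not anticipate a genuine obstacle: the whole proof is driven by the blocked-shift fact, which manufactures an auxiliary set in the original family exactly when a naive bound would fail. The only real work is careful bookkeeping of the membership of $i$ and $j$ across $F$, $G$, $F_\ell$ and their preimages.
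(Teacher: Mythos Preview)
Your proof is correct. The paper itself does not give a proof of this lemma at all: it is simply quoted as \cite[Fact 2.1]{G23}, so there is no in-paper argument to compare against. Your case analysis based on the membership of $i$ and $j$, driven by the ``blocked-shift'' observation (if $H'\in s_{i,j}(\mathcal{H})$ has $j\in H'$ and $i\notin H'$ then $(H'\setminus\{j\})\cup\{i\}\in\mathcal{H}$), is the standard way this fact is established and all subcases go through as you describe. The one place where your sketch is slightly terse is the subcase $i\in F_1\cap F_2$ for the $t$-intersecting part: not every $F_\ell$ with $i\in F_\ell$ need be a genuinely shifted set, but after noting that the sub-subcase where both $F_1,F_2\in\mathcal{F}$ is immediate, the remaining possibilities are handled exactly by the compensation argument you indicate (losing $j$ from the intersection of the preimages is offset by gaining $i$).
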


Let  $\mathcal{F} \subseteq 2^{[n]}$ be a family. For each $i\in [n]$, we denote 
\begin{align*}
\mathcal{F}(\bar{i}) &=\left\{F \in \mathcal{F}: i \notin F \right\},\\ 
\mathcal{F}(i)&=\left\{F\backslash \{i\}: i \in F \in \mathcal{F}\right\}.
\end{align*}
The following lemma will help with the induction step.

\begin{lemma} \label{S4}
 Let $\ell\geq t\geq 0,k\geq 1$ and $n \geq 2 k+t$ be  integers.
Let $\mathcal{F} \subseteq\binom{[n]}{k+t}$ and $\mathcal{G} \subseteq\binom{[n]}{k}$ be shifted cross-intersecting families.  Then $\mathcal{F}(n)$ and $\mathcal{G}(n)$ are cross-intersecting. Moreover,  if $n>2k+t$ and $\mathcal{F}$ is $\ell$-intersecting, then $\mathcal{F}(n)$ is $\ell$-intersecting.
\end{lemma}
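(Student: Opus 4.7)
The plan is to prove both assertions by contradiction, in each case combining a dimension count that produces a ``free'' element $j \in [n-1]$ outside the sets under consideration with a single invocation of the shifted hypothesis to move the element $n$ to $j$.

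For the cross-intersecting part, I would fix $F' \in \mathcal{F}(n)$ and $G' \in \mathcal{G}(n)$, lift them to $F := F' \cup \{n\} \in \mathcal{F}$ and $G := G' \cup \{n\} \in \mathcal{G}$, and suppose for contradiction that $F' \cap G' = \emptyset$. Then $|F' \cup G'| = (k+t-1) + (k-1) = 2k+t-2$, and the hypothesis $n \geq 2k+t$ yields $|[n-1] \setminus (F' \cup G')| \geq 1$; pick any $j$ in this set. Since $\mathcal{F}$ is shifted, $n \in F$, $j < n$ and $j \notin F$, we obtain $F' \cup \{j\} = (F \setminus \{n\}) \cup \{j\} \in \mathcal{F}$. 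But then $(F' \cup \{j\}) \cap G = \emptyset$, because $F' \cap G' = \emptyset$, $n \notin F' \cup \{j\}$, and $j \notin G' \cup \{n\}$; this contradicts the cross-intersecting property.

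For the $\ell$-intersecting assertion, I would fix $F_1', F_2' \in \mathcal{F}(n)$ and their lifts $F_i := F_i' \cup \{n\} \in \mathcal{F}$. From $|F_1 \cap F_2| \geq \ell$ I immediately get $|F_1' \cap F_2'| \geq \ell - 1$, so the task is to rule out equality. Assuming it holds, $|F_1 \cup F_2| = 2(k+t) - \ell \leq 2k + t < n$, using $\ell \geq t$ and $n > 2k+t$, so there exists $j \in [n-1] \setminus (F_1 \cup F_2)$. The shifted hypothesis then yields $F_1' \cup \{j\} \in \mathcal{F}$, whose intersection with $F_2$ equals $F_1' \cap F_2' $ (since $n \notin F_1'$ and $j \notin F_2$) and so has size $\ell - 1 < \ell$, contradicting $\ell$-intersectingness.

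The main delicate point is simply the bookkeeping on the parameters: the weak bound $n \geq 2k+t$ is exactly what guarantees the existence of the free element $j$ in the first part, while the strict bound $n > 2k+t$ together with the hypothesis $\ell \geq t$ is exactly what is needed in the second. No ingredient beyond the shifted property and a count of $|F_1 \cup F_2|$ or $|F' \cup G'|$ is required.
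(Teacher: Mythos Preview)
Your proof is correct and follows essentially the same approach as the paper: find a ``free'' element outside the union using the cardinality bounds $n\ge 2k+t$ (resp.\ $n>2k+t$ together with $\ell\ge t$), then invoke shiftedness to replace $n$ by that element and contradict cross-intersecting (resp.\ $\ell$-intersecting). The only cosmetic difference is that you phrase both parts as proofs by contradiction, whereas the paper argues the inequalities directly; the underlying count and use of shiftedness are identical.
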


\begin{proof}
  For the first statement, let $F_1\in \mathcal{F}(n)$ and $G_1\in \mathcal{G}(n)$. Then $F_1\cup\{n\}\in \mathcal{F}$ and $G_1\cup\{n\}\in \mathcal{G}$.  Since $\mathcal{F}$ is shifted and
$
|F_1\cup G_1\cup\{n\}|\leq 2k+t-1\leq n-1,
$
 there exists $x\notin F_1\cup G_1\cup\{n\}$ such that $F_1\cup\{x\} \in \mathcal{F}$. Then 
$
|F_1\cap G_1|=|(F_1\cup\{x\})\cap (G_1\cup\{n\})|\geq 1$. Hence, $\mathcal{F}(n)$ and $\mathcal{G}(n)$ are cross-intersecting. 

For the second statement, let $E_1, E_2 \in \mathcal{F}(n)$.  Then $E_1\cup\{n\}, E_2\cup\{n\}\in \mathcal{F}$. Since $\mathcal{F}$ is shifted and
$
|E_1\cup E_2\cup\{n\}|\leq 2(k+t)-\ell< n,
$
there exists $x\notin E_1\cup E_2\cup\{n\}$ such that $E_1\cup\{x\} \in \mathcal{F}$. Recall that $\mathcal{F}$ is $\ell$-intersecting. It is immediate that
$
|E_1\cap E_2|=|(E_1\cup\{x\})\cap (E_2\cup\{n\})|\geq \ell,
$
as desired. 
\end{proof}

The following result is a key ingredient in our proof of Conjecture \ref{co1}. 

\begin{theorem}\label{main2}
Let $t, s$ be non-negative integers. Let $k \geq s+1$, $n \geq 2 k+t$ and $m> k+t+s$ be integers.  Let $\mathcal{F} \subseteq\left\{A\in \binom{[n]}{k+t}: |A\cap[m]|\geq t+s+1\right\}$ and $\mathcal{G} \subseteq\left\{B\in \binom{[n]}{k}: |B\cap[m]|\geq s+1\right\}$ be  cross-intersecting families. 
 Suppose that  $\mathcal{F}$ is $t$-intersecting. Then
$$
|\mathcal{F}|+|\mathcal{G}| \leq\binom{n}{k}-\sum_{i=0}^s\binom{m}{i}\binom{n-m}{k-i}.
$$
\end{theorem}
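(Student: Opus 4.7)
The plan is to prove Theorem~\ref{main2} by shifting $\mathcal{F},\mathcal{G}$ and then inducting on $n$, with the base case $n=2k+t$ handled by a complementation bijection. By Lemma~\ref{S3}, repeated application of the operators $s_{i,j}$ preserves cross-intersection and the $t$-intersecting property of $\mathcal{F}$. They also preserve the restricted-universe constraints $|F\cap[m]|\ge t+s+1$ and $|G\cap[m]|\ge s+1$: a shift with $i<j$ leaves $|A\cap[m]|$ unchanged when $i,j\le m$ or $i,j>m$, and strictly increases it when $i\le m<j$. Hence I may assume $\mathcal{F}$ and $\mathcal{G}$ are shifted. In the easy boundary case $m\ge n$ the two universe constraints are vacuous and the target right-hand side collapses to $\binom{n}{k}$, so Theorem~\ref{F16}(i) settles it; henceforth I assume $m\le n-1$.

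For the base case $n=2k+t$, consider the complementation bijection $F\mapsto\bar F:=[n]\setminus F$ from $\binom{[n]}{k+t}$ to $\binom{[n]}{k}$. From $m>k+t+s$ I obtain $|\bar F\cap[m]|=m-|F\cap[m]|\ge m-(k+t)\ge s+1$, so each $\bar F$ lies in $\mathcal{G}_{\max}:=\{B\in\binom{[n]}{k}:|B\cap[m]|\ge s+1\}$, whose cardinality is exactly the target right-hand side. Since $F\cap G\ne\emptyset$ together with $|G|=|\bar F|$ forces $G\ne\bar F$, the families $\{\bar F:F\in\mathcal{F}\}$ and $\mathcal{G}$ are disjoint subsets of $\mathcal{G}_{\max}$, giving $|\mathcal{F}|+|\mathcal{G}|\le|\mathcal{G}_{\max}|$.

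For the inductive step $n>2k+t$, I partition each family by membership of $n$ into $\mathcal{F}(\bar n),\mathcal{F}(n)$ and $\mathcal{G}(\bar n),\mathcal{G}(n)$. Since $m\le n-1$, deleting $n$ does not alter intersection with $[m]$, so the universe constraints pass to all four pieces. By Lemma~\ref{S4} (using the strict inequality $n>2k+t$), the pair $(\mathcal{F}(n),\mathcal{G}(n))$ is cross-intersecting on $[n-1]$ and $\mathcal{F}(n)$ is still $t$-intersecting, while $(\mathcal{F}(\bar n),\mathcal{G}(\bar n))$ inherits these properties trivially. I then apply the inductive hypothesis to these two pairs with parameter tuples $(n-1,k,t,s,m)$ and $(n-1,k-1,t,s,m)$, sum the resulting bounds, and invoke Pascal's identity once on $\binom{n}{k}$ and once on each $\binom{n-m}{k-i}$ to recover exactly the target bound for $(n,k,t,s,m)$.

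The main obstacle is the boundary case $k=s+1$, where the inductive hypothesis for the second piece is not literally applicable (it would require $k-1\ge s+1$). In this case the constraint $|G\cap[m]|\ge s+1=|G|$ forces $G\subseteq[m]\subseteq[n-1]$, so $\mathcal{F}(n)=\mathcal{G}(n)=\emptyset$, and a short Vandermonde calculation shows that the formal bound for parameters $(n-1,k-1,t,s,m)$ equals $0$; thus the Pascal telescoping goes through unchanged. Beyond this boundary, the remaining technical care is to verify at each step that shiftedness, cross-intersection, $t$-intersection, and the two universe inequalities all survive the reduction.
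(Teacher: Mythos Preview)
Your proof is correct and follows essentially the same route as the paper: shift, then induct on $n$ by splitting off the element $n$, with Lemma~\ref{S4} ensuring the hypotheses survive. The only cosmetic differences are that the paper treats $k=s+1$ as a separate base case (invoking Theorem~\ref{F16} on the ground set $[m]$) rather than your inline observation that $\mathcal{F}(n)=\mathcal{G}(n)=\emptyset$, and in the $n=2k+t$ base case the paper simply notes that the target bound collapses to $\binom{2k+t}{k}$ rather than using your sharper complementation into $\mathcal{G}_{\max}$.
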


\begin{proof}
For notational convenience, 
we denote $\mathcal{R}(n, k, m):=\left\{B\in \binom{[n]}{k}: |B\cap[m]|\geq s+1\right\}$. It suffices to prove that
$|\mathcal{F}|+|\mathcal{G}| \leq|\mathcal{R}(n, k, m)|$.
If $m\geq n$, then Theorem \ref{F16} implies that 
$$
|\mathcal{F}|+|\mathcal{G}|\leq \binom{n}{k}=|\mathcal{R}(n, k, m)|.
$$
Next we assume that $m<n$.
Note that the shifting operation maintains the properties
$|F\cap[m]|\geq t+s+1$ and $|G\cap[m]|\geq s+1$. Combining with Lemma \ref{S3}, we may assume that $\mathcal{F}$ and $\mathcal{G}$ are shifted. 
For fixed $m$,  we are ready to apply double induction on $n$ and $k$.

For the case  $k = s+1$, we have that $\mathcal{F} \subseteq\binom{[n]}{s+t+1}$ and for any $F\in \mathcal{F}$, $|F\cap[m]|\geq t+s+1$. This implies that 
$\mathcal{F} \subseteq\binom{[m]}{s+t+1}$.
Similarly, we have $\mathcal{G} \subseteq\binom{[m]}{s+1}$. 
Since $m>2s+1+t$, note that $\mathcal{F}$ and $\mathcal{G}$ are  cross-intersecting, $\mathcal{F}$ is $t$-intersecting, by Theorem \ref{F16}, we get 
$$
|\mathcal{F}|+|\mathcal{G}| \leq \binom{m}{s+1}=|\mathcal{R}(n, s+1, m)|.
$$

For the case $n=2 k+t$, by Theorem \ref{F16}, we may assume that $m< 2k+t$. 
Since $2k+t-m<k-s $, it is enough to show that $
|\mathcal{F}|+|\mathcal{G}| \leq\binom{2k+t}{k}.
$
For any $F \in\binom{[2k+t]}{k+t}$, the cross-intersecting property of $\mathcal{F}$ and $\mathcal{G}$ implies that $F \notin \mathcal{F}$ or $[2 k+t] \backslash F \notin \mathcal{G}$. So 
$|\mathcal{F}|+|\mathcal{G}| \leq\binom{2k+t}{k}$, as desired.

Now assume that $k > s+1$ and $n > 2 k+t$.
Clearly,  $\mathcal{F}(\bar{n})\subseteq\left\{A\in \binom{[n-1]}{k+t}: |A\cap[m]|\geq t+s+1\right\}$ and $\mathcal{G}(\bar{n})\subseteq\mathcal{R}(n-1, k, m)$ are cross-intersecting. In addition,  $\mathcal{F}(\bar{n})$ is $t$-intersecting.
By induction hypothesis, we get
\begin{align}\label{f21}
|\mathcal{F}(\bar{n})|+|\mathcal{G}(\bar{n})| \leq\binom{n-1}{k}-\sum_{i=0}^s\binom{m}{i}\binom{n-1-m}{k-i}.
\end{align}
Since $n>m$ and $k > s+1$, we have 
$
\mathcal{F}(n)\subseteq\left\{A^*\in \binom{[n-1]}{k+t-1}: |A^*\cap[m]|\geq t+s+1\right\}
$
and
$
\mathcal{G}(n)\subseteq\mathcal{R}(n-1, k-1, m).
$
By Lemma \ref{S4}, $\mathcal{F}(n)$ and $\mathcal{G}(n)$ are cross-intersecting, and $\mathcal{F}(n)$ is $t$-intersecting.
Note that $m>k-1+t+s$. By induction hypothesis, we get
\begin{align}\label{f22}
|\mathcal{F}(n)|+|\mathcal{G}(n)| \leq|\mathcal{R}(n-1, k-1, m)|=\binom{n-1}{k-1}-\sum_{i=0}^s\binom{m}{i}\binom{n-1-m}{k-1-i}.
\end{align}
The inequalities (\ref{f21}) and (\ref{f22}) together imply Theorem \ref{main2}.
\end{proof}

Now we are in a position to prove Conjecture \ref{co1}. 

\medskip 

\noindent{\bf Proof of Conjecture \ref{co1}.}
First of all, we show the following claim. 

\begin{claim}\label{le1}
For any $F\in \mathcal{F}$ and $G\in \mathcal{G}$, we have $|F\cap[k+t+s]|\geq t+s+1$ and $|G\cap[k+t+s]|\geq s+1$.
\end{claim}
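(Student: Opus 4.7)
The plan is to verify both inequalities directly by contradiction, exploiting the full subfamily $\binom{[k+t+s]}{k+t}\subseteq\mathcal{F}$ to produce an explicit offending set.

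For the second inequality $|G\cap[k+t+s]|\ge s+1$, I would start with an arbitrary $G\in\mathcal{G}$ and suppose, for contradiction, that $b:=|G\cap[k+t+s]|\le s$. Then $|[k+t+s]\setminus G|=k+t+s-b\ge k+t$, so I can pick any $F'\in\binom{[k+t+s]\setminus G}{k+t}$. By hypothesis $F'\in\binom{[k+t+s]}{k+t}\subseteq\mathcal{F}$, yet $F'\cap G=\emptyset$, contradicting the cross-intersecting property of $\mathcal{F}$ and $\mathcal{G}$. This forces $b\ge s+1$.

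For the first inequality $|F\cap[k+t+s]|\ge t+s+1$, the same idea is refined using the $(t+1)$-intersecting property. Let $F\in\mathcal{F}$ and set $a:=|F\cap[k+t+s]|$. Assume for contradiction that $a\le t+s$. I want to build $F'\in\binom{[k+t+s]}{k+t}$ with $|F\cap F'|\le t$. The natural choice is to take $F'$ to contain all of $[k+t+s]\setminus F$ (which has $k+t+s-a$ elements) and then fill up to size $k+t$ using elements of $F\cap[k+t+s]$; when $a\le s$ the complement is already large enough and one simply picks $F'\subseteq[k+t+s]\setminus F$, giving $|F\cap F'|=0$, and when $s<a\le t+s$ one must add exactly $a-s$ extra elements from $F\cap[k+t+s]$, yielding $|F\cap F'|=a-s\le t$. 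In either situation $F'\in\mathcal{F}$ while $|F\cap F'|\le t<t+1$, contradicting the $(t+1)$-intersecting property.

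There is no real obstacle here: both parts are straightforward pigeonhole arguments on the restricted universe $[k+t+s]$. The only mild subtlety is handling the split into the two subcases $a\le s$ and $s<a\le t+s$ in the first inequality; this can be unified by noting that the minimum possible value of $|F\cap F'|$ as $F'$ ranges over $\binom{[k+t+s]}{k+t}$ equals $\max\{0,a-s\}$, so the condition $\max\{0,a-s\}\ge t+1$ required by $(t+1)$-intersection immediately forces $a\ge t+s+1$. Writing the claim out this way makes both bounds parallel and keeps the proof to a few lines.
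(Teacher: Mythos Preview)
Your proposal is correct and follows essentially the same route as the paper: both parts produce an explicit witness $F'\in\binom{[k+t+s]}{k+t}\subseteq\mathcal{F}$ that violates cross-intersection (for $G$) or $(t+1)$-intersection (for $F$). The only cosmetic difference is that the paper first observes $|F_0\cap[k+t]|\ge t+1$ (from $(t+1)$-intersecting with the set $[k+t]$) and then takes exactly $t$ elements of $F_0$ together with $k$ elements of $[k+t+s]\setminus F_0$, whereas you minimize $|F\cap F'|$ directly via $\max\{0,a-s\}$; your version avoids that preliminary step and is marginally cleaner.
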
 

\begin{proof}[Proof of claim]
Suppose that there exists $F_0\in \mathcal{F}$ such that $|F_0\cap[k+t+s]|\leq t+s$. Since $[k+t]\in\mathcal{F}$ and $\mathcal{F}$ is $(t+1)$-intersecting, we have $|F_0\cap[k+t]|\geq t+1$. Let $T\subseteq F_0\cap[k+t+s]$ with $|T|=t$ and $K\subseteq [k+t+s]\backslash F_0$ with $|K|=k$.
Since $\binom{[k+t+s]}{k+t}\subseteq\mathcal{F}$, we have $K\cup T \in \mathcal{F}$. But $|(K\cup T)\cap F_0|=t$, contradicting with that $\mathcal{F}$ is $(t+1)$-intersecting. 
In addition, suppose that there exists $G_0\in \mathcal{G}$ such that $|G_0\cap[k+t+s]|\leq s$. Since $\binom{[k+t+s]}{k+t}\subseteq\mathcal{F}$, we get that there is $F_1\in\mathcal{F}$ such that $F_1\cap G_0=\emptyset$,  contradicting with that $\mathcal{F}$ and $\mathcal{G}$ are cross-intersecting. 
\end{proof}

By Claim \ref{le1}, we have 
$$\mathcal{F} \subseteq\left\{A\in \binom{[n]}{k+t}: |A\cap[k+t+s]|\geq t+s+1\right\},$$ 
and 
$$ \mathcal{G} \subseteq\left\{B\in \binom{[n]}{k}: |B\cap[k+t+s]|\geq s+1\right\}.$$ 
The point is that the shifting operation maintains the properties  
$|F\cap[k+t+s]|\geq t+s+1$, $|G\cap[k+t+s]|\geq s+1$ and $\binom{[k+t+s]}{k+t}\subseteq\mathcal{F}$. In view of this fact and Lemma \ref{S3}, we may assume that $\mathcal{F}$ and $\mathcal{G}$ are shifted. 
Next we are ready to apply induction on $n$.

For the case $n=2 k+t$, note that $\binom{k+t+s}{k+t}+\binom{n}{k}-\sum_{i=0}^s\binom{k+t+s}{i}\binom{n-k-t-s}{k-i}=\binom{2k+t}{k},
$
so it suffices to show that $
|\mathcal{F}|+|\mathcal{G}| \leq\binom{2k+t}{k}.
$
For any $F \in\binom{[2k+t]}{k+t}$, the cross-intersecting property of $\mathcal{F}$ and $\mathcal{G}$ implies that $F \notin \mathcal{F}$ or $[2 k+t] \backslash F \notin \mathcal{G}$. It follows that
$|\mathcal{F}|+|\mathcal{G}| \leq\binom{2k+t}{k}$, as desired.

For the case $k = s+1$, we have
$\mathcal{F} \subseteq\binom{[2s+t+1]}{s+t+1}$ and $\mathcal{G} \subseteq\binom{[2s+t+1]}{s+1}$. 
Since $\binom{[2s+t+1]}{s+1+t}\subseteq\mathcal{F}$, we get $\mathcal{F} =\binom{[2s+t+1]}{s+t+1}$. Note that $ \mathcal{G} \subseteq\left\{B\in \binom{[2s+t+1]}{s+1}: |B\cap[2s+t+1]|\geq s+1\right\}$. Thus the result follows.

Next assume that $n > 2 k+t$ and $k>s+1$. Note that  $\mathcal{F}(\bar{n})$ is $(t+1)$-intersecting and $\binom{[k+t+s]}{k+t}\subseteq\mathcal{F}(\bar{n})$.
Moreover,  $\mathcal{F}(\bar{n})\subseteq\left\{A\in \binom{[n-1]}{k+t}: |A\cap[k+t+s]|\geq t+s+1\right\}$ and $\mathcal{G}(\bar{n})\subseteq\left\{B\in \binom{[n-1]}{k}: |B\cap[k+t+s]|\geq s+1\right\}$ are cross-intersecting. 
By induction hypothesis, we get
\begin{align}\label{f31}
|\mathcal{F}(\bar{n})|+|\mathcal{G}(\bar{n})| \leq\binom{k+t+s}{k+t}+\binom{n-1}{k}-\sum_{i=0}^s\binom{k+t+s}{i}\binom{n-k-t-s-1}{k-i}.
\end{align}
Since $n>k+t+s$ and $k > s+1$, we have 
$ \mathcal{F}(n)\subseteq\left\{A^*\in \binom{[n-1]}{k+t-1}: |A^*\cap[k+t+s]|\geq t+s+1\right\} $
and
$
\mathcal{G}(n)\subseteq\left\{B^*\in \binom{[n-1]}{k-1}: |B^*\cap[k+t+s]|\geq s+1\right\}.
$ 
By Lemma \ref{S4}, $\mathcal{F}(n)$ and $\mathcal{G}(n)$ are cross-intersecting, and $\mathcal{F}(n)$ is $(t+1)$-intersecting. Here, we just use the $t$-intersecting property of $\mathcal{F}(n)$.
Setting $m:=k+t+s>k-1+t+s$ in Theorem \ref{main2},  we get
\begin{align}\label{f32}
|\mathcal{F}(n)|+|\mathcal{G}(n)| \leq\binom{n-1}{k-1}-\sum_{i=0}^s\binom{k+t+s}{i}\binom{n-k-t-s-1}{k-1-i}.
\end{align}
Combining (\ref{f31}) with (\ref{f32}), 
we get the desired bound. 
$\hfill \square$\vspace{3mm}

\section{Proof of Theorem \ref{main6}} 

\label{se4}

To prove Theorem \ref{main6}, we need the following lemmas.

\begin{lemma} \cite[Proposition 1.3]{F87} 
\label{F87}
Let $k \geq \ell\geq 1$ and $n \geq k+\ell$ be integers.  Let $\mathcal{F} \subseteq\binom{[n]}{k}$ and $\mathcal{G} \subseteq\binom{[n]}{\ell}$ be  cross-intersecting families. Then
$$
|\mathcal{F}|+|\mathcal{G}| \leq\binom{n}{k}.
$$
\end{lemma}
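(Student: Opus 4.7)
The plan is to mimic the shifting-plus-induction strategy already used throughout Section~\ref{se3}. By Lemma~\ref{S3}, repeatedly applying the shifting operators $s_{i,j}$ to both $\mathcal{F}$ and $\mathcal{G}$ preserves the sizes $|\mathcal{F}|,|\mathcal{G}|$ and the cross-intersecting property, so I may reduce to the case where both $\mathcal{F}$ and $\mathcal{G}$ are shifted. I then induct on $n$, with base case $n = k+\ell$.

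For the base case $n = k+\ell$: every $F \in \mathcal{F}$ has $|[n]\setminus F| = n-k = \ell$, so cross-intersection with any $G \in \mathcal{G}$ (also of size $\ell$) is equivalent to $G \neq [n]\setminus F$. Hence the complementary family $\mathcal{G}^{c} := \{[n]\setminus G : G \in \mathcal{G}\}$ is a subfamily of $\binom{[n]}{k}$ that is disjoint from $\mathcal{F}$, which gives $|\mathcal{F}|+|\mathcal{G}| = |\mathcal{F}|+|\mathcal{G}^{c}| \leq \binom{n}{k}$.

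For the inductive step $n > k+\ell$: split each family according to whether $n$ is a member, so $|\mathcal{F}| = |\mathcal{F}(\bar n)|+|\mathcal{F}(n)|$ and likewise for $\mathcal{G}$. The pair $(\mathcal{F}(\bar n),\mathcal{G}(\bar n))$ inside $\binom{[n-1]}{k}\times \binom{[n-1]}{\ell}$ is obviously cross-intersecting, and since $n-1 \geq k+\ell$ the induction hypothesis yields $|\mathcal{F}(\bar n)|+|\mathcal{G}(\bar n)| \leq \binom{n-1}{k}$. For the pair $(\mathcal{F}(n),\mathcal{G}(n)) \subseteq \binom{[n-1]}{k-1}\times\binom{[n-1]}{\ell-1}$, I invoke the first part of Lemma~\ref{S4}, specializing the lemma's parameters to $k_{\mathrm{Lem}}=\ell$ and $t_{\mathrm{Lem}}=k-\ell\geq 0$; then the lemma's hypothesis $n \geq 2k_{\mathrm{Lem}}+t_{\mathrm{Lem}} = k+\ell$ is precisely our assumption, so $\mathcal{F}(n)$ and $\mathcal{G}(n)$ remain cross-intersecting in $[n-1]$. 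Applying the induction hypothesis to this reduced pair at levels $k-1, \ell-1$ (note $k-1 \geq \ell-1$ and $n-1 \geq (k-1)+(\ell-1)$) gives $|\mathcal{F}(n)|+|\mathcal{G}(n)| \leq \binom{n-1}{k-1}$. Summing and using Pascal's identity closes the induction: $|\mathcal{F}|+|\mathcal{G}| \leq \binom{n-1}{k}+\binom{n-1}{k-1} = \binom{n}{k}$.

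The main obstacle is the degenerate boundary $\ell = 1$, where the reduced pair has $\mathcal{G}(n) \subseteq \binom{[n-1]}{0}$ and the induction hypothesis is not formally applicable at level $\ell-1=0$. This is handled by a short direct check: either $\mathcal{G}(n) = \emptyset$, so $|\mathcal{F}(n)|+|\mathcal{G}(n)| \leq \binom{n-1}{k-1}$ holds trivially, or $\mathcal{G}(n) = \{\emptyset\}$, in which case cross-intersection forces $\mathcal{F}(n) = \emptyset$ and the same bound holds. An analogous trivial argument disposes of the case $\ell = 0$ as a base case (where $\mathcal{G}\subseteq\{\emptyset\}$ forces $\mathcal{F}$ or $\mathcal{G}$ to be empty). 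With these boundary checks in hand, the shifting reduction and the two-part induction give the desired inequality.
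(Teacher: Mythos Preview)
The paper does not actually supply a proof of this lemma; it is simply quoted from Frankl's 1987 paper and used as a black box in Section~\ref{se4}. So there is no ``paper's own proof'' to compare against.

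That said, your argument is correct. The complementation argument at $n=k+\ell$ is exactly the standard one (it does not even need shiftedness), and the inductive step is sound: shiftedness is used only to invoke the first part of Lemma~\ref{S4}, and the induction hypothesis (stated for arbitrary, not necessarily shifted, cross-intersecting pairs) then applies to both $(\mathcal{F}(\bar n),\mathcal{G}(\bar n))$ and $(\mathcal{F}(n),\mathcal{G}(n))$. Your bookkeeping for the parameter match with Lemma~\ref{S4} is right, and the degenerate case $\ell=1$ is handled properly. One minor remark: describing $n=k+\ell$ as the ``base case'' is a slight abuse, since in the inductive step you also reduce $k$ and $\ell$; more precisely you are doing strong induction on $n$ over all admissible $(k,\ell)$, handling the boundary $n=k+\ell$ directly at each level. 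This is exactly the template the paper itself uses in the proofs of Theorem~\ref{main2} and Theorem~\ref{main4}, so your write-up fits the surrounding style well. The aside about $\ell=0$ is unnecessary (the statement assumes $\ell\ge 1$ and your $\ell=1$ treatment already avoids descending to $\ell-1=0$), but it does no harm.
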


\begin{lemma}\label{s51}
Let $s\geq 0$, $k \geq s+1$, $n > k+s$ and $ m> 2s+1$ be integers.  Let $\mathcal{F} \subseteq\left\{A\in \binom{[n]}{k}: |A\cap[m]|\geq s+1\right\}$ and $\mathcal{G} \subseteq\left\{B\in \binom{[n]}{s+1}: |B\cap[m]|\geq s+1\right\}$ be  cross-intersecting families.  Then
$$
|\mathcal{F}|+|\mathcal{G}| \leq\binom{n}{k}-\sum_{i=0}^s\binom{m}{i}\binom{n-m}{k-i}.
$$
\end{lemma}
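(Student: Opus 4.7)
The plan is to run an induction on $n$, mirroring the structure of Theorem \ref{main2}. First I would record the key structural fact that every $B\in\mathcal{G}$ has $|B|=s+1=|B\cap[m]|$, so $\mathcal{G}\subseteq\binom{[m]}{s+1}$. I would also check that the shifting operator $s_{i,j}$ never decreases $|F\cap[m]|$ (it is preserved when $i,j$ lie on the same side of $m$, and it increases by one when $i\leq m<j$), so by Lemma \ref{S3} we may assume both $\mathcal{F}$ and $\mathcal{G}$ are shifted.

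Two base cases handle the boundary. When $n\leq m$, the constraint $|A\cap[m]|\geq s+1$ is automatic on $\binom{[n]}{k}$, and each term of $\sum_{i=0}^s\binom{m}{i}\binom{n-m}{k-i}$ vanishes because $k-i>0\geq n-m$; then Lemma \ref{F87} gives $|\mathcal{F}|+|\mathcal{G}|\leq\binom{n}{k}$, matching the right-hand side. When $k=s+1$, one also has $\mathcal{F}\subseteq\binom{[m]}{s+1}$, and since $m\geq 2s+2$ a second application of Lemma \ref{F87} yields $|\mathcal{F}|+|\mathcal{G}|\leq\binom{m}{s+1}$, which agrees with the right-hand side by Vandermonde.

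For the inductive step I would assume $n>m$ and $k\geq s+2$ and split at the top element $n$. Since $\mathcal{G}\subseteq\binom{[m]}{s+1}$ and $n>m$, we get $\mathcal{G}(n)=\emptyset$. The pair $(\mathcal{F}(n),\mathcal{G})$ remains cross-intersecting, because if $F\cup\{n\}\in\mathcal{F}$ meets $B\in\mathcal{G}$ then $n\notin B$ forces the intersection to lie in $F$. Moreover $\mathcal{F}(n)\subseteq\bigl\{A\in\binom{[n-1]}{k-1}:|A\cap[m]|\geq s+1\bigr\}$, since removing $n>m$ does not touch $A\cap[m]$. The induction hypothesis applied with parameters $(n-1,k-1)$ then gives
\[
|\mathcal{F}(n)|+|\mathcal{G}|\leq\binom{n-1}{k-1}-\sum_{i=0}^s\binom{m}{i}\binom{n-1-m}{k-1-i},
\]
while the trivial universe bound contributes
\[
|\mathcal{F}(\bar{n})|\leq\binom{n-1}{k}-\sum_{i=0}^s\binom{m}{i}\binom{n-1-m}{k-i}.
\]
Adding the two and invoking Pascal's identity twice collapses the right-hand side to $\binom{n}{k}-\sum_{i=0}^s\binom{m}{i}\binom{n-m}{k-i}$, as required.

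The main subtlety, rather than a deep obstacle, is the choice of which half of the $n$-split to feed into the induction hypothesis. Applying the induction to $(\mathcal{F}(\bar{n}),\mathcal{G})$ would require $n-1>k+s$, which fails at the boundary $n=k+s+1$. Working instead with $(\mathcal{F}(n),\mathcal{G})$ and taking only the trivial universe bound on $\mathcal{F}(\bar{n})$ sidesteps this borderline case, because the condition $n-1>(k-1)+s$ simplifies to the standing hypothesis $n>k+s$; Pascal's identity then fits the two halves together with no slack.
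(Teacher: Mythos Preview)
Your proof is correct and essentially mirrors the paper's induction, built on the same observation $\mathcal{G}\subseteq\binom{[m]}{s+1}$ and the same base cases $n\le m$ and $k=s+1$. The one difference is which half of the $n$-split receives the induction hypothesis: the paper applies it to $(\mathcal{F}(\bar n),\mathcal{G}(\bar n))$ and uses the trivial universe bound on $\mathcal{F}(n)$, which forces the extra base case $n=k+s+1$, whereas your swap eliminates that case exactly as you explain in your final paragraph (and, incidentally, neither argument actually needs the shifting step you included).
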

\begin{proof}
If $m\geq n$, then Lemma \ref{F87} implies that 
$
|\mathcal{F}|+|\mathcal{G}|\leq \binom{n}{k},
$
as desired.
So we may assume that $m<n$.
We proceed the proof by applying induction on $n$.

For the case  $k = s+1$, we have $\mathcal{F} \subseteq\binom{[m]}{s+1}$. Note that $m>2s+1$. Since $\mathcal{F}\subseteq\binom{[m]}{s+1}$ and  $\mathcal{G} \subseteq\binom{[m]}{s+1}$ are cross-intersecting, it follows from Lemma \ref{F87} that
$
|\mathcal{F}|+|\mathcal{G}| \leq \binom{m}{s+1},
$
as desired.

For the case $n=k+s+1$, by Lemma \ref{F87}, we may assume that $m< k+s+1$. 
Since $k+s+1-m<k-s $, the upper bound in Lemma \ref{s51} is $
\binom{k+s+1}{k}$, which simply follows from the cross-intersecting property of $\mathcal{F}$ and $\mathcal{G}$.

Now assume that $k > s+1$ and $n > k+s+1$.
Clearly,  $\mathcal{F}(\bar{n})\subseteq\left\{A\in \binom{[n-1]}{k}: |A\cap[m]|\geq s+1\right\}$ and $\mathcal{G}(\bar{n})\subseteq\left\{B\in \binom{[n-1]}{s+1}: |B\cap[m]|\geq s+1\right\}$ are cross-intersecting. 
By induction hypothesis, we get
\begin{align}\label{f41}
|\mathcal{F}(\bar{n})|+|\mathcal{G}(\bar{n})| \leq\binom{n-1}{k}-\sum_{i=0}^s\binom{m}{i}\binom{n-1-m}{k-i}.
\end{align}
Since $n>m$ and $k > s+1$, we have 
$
\mathcal{F}(n)\subseteq\left\{A^*\in \binom{[n-1]}{k-1}: |A^*\cap[m]|\geq s+1\right\}
$
and
$
\mathcal{G}(n)=\emptyset.
$
It follows that
\begin{align}\label{f42}
|\mathcal{F}(n)|+|\mathcal{G}(n)| =|\mathcal{F}(n)|\leq\binom{n-1}{k-1}-\sum_{i=0}^s\binom{m}{i}\binom{n-1-m}{k-1-i}.
\end{align}
The inequalities (\ref{f41}) and (\ref{f42}) together imply Lemma \ref{s51}.
\end{proof}


To show Theorem \ref{main6}, we extend Lemma \ref{s51} to a more general setting. 

\begin{theorem}\label{main4}
Let $s\geq 0$, $k \geq \ell\geq s+1$, $n \geq k+\ell$ and $m> \ell+s$ be integers.  Let $\mathcal{F} \subseteq\left\{A\in \binom{[n]}{k}: |A\cap[m]|\geq s+1\right\}$ and $\mathcal{G} \subseteq\left\{B\in \binom{[n]}{\ell}: |B\cap[m]|\geq s+1\right\}$ be  cross-intersecting families. Then
$$
|\mathcal{F}|+|\mathcal{G}| \leq\binom{n}{k}-\sum_{i=0}^s\binom{m}{i}\binom{n-m}{k-i}.
$$
\end{theorem}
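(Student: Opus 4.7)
The plan is to imitate the proof of Theorem \ref{main2}, combining the shifting technique of Lemma \ref{S3} with a double induction on $n$ and the smaller uniformity $\ell$, and using Lemma \ref{s51} to dispatch the base case $\ell=s+1$. If $m\ge n$ then the constraint $|A\cap[m]|\ge s+1$ is vacuous and Lemma \ref{F87} gives $|\mathcal{F}|+|\mathcal{G}|\le \binom{n}{k}$; at the same time every term $\binom{n-m}{k-i}$ with $i\le s$ vanishes because $k-i\ge k-s\ge 1>n-m$, so the desired bound reduces to $\binom{n}{k}$. Henceforth I may assume $m<n$. A key observation is that for $i<j$ the shifting operator $s_{i,j}$ never decreases $|A\cap[m]|$: the only way it alters $A\cap[m]$ is when $i\le m<j$, and in that case it moves an element outside $[m]$ into $[m]$. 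Hence by Lemma \ref{S3}, I may, and do, assume that both $\mathcal{F}$ and $\mathcal{G}$ are shifted.

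For the base case $\ell=s+1$ the claim is exactly Lemma \ref{s51}, the hypothesis $m>\ell+s=2s+1$ matching perfectly. For the base case $n=k+\ell$, I claim that the summation is empty: the term $\binom{n-m}{k-i}$ is nonzero only if $n-m\ge k-i$, i.e.\ $i\ge m-\ell$, but $m>\ell+s$ forces $m-\ell\ge s+1$, contradicting $i\le s$. Thus the bound collapses to $\binom{n}{k}=\binom{k+\ell}{k}$, and this follows from the cross-intersecting property, since each of the $\binom{k+\ell}{k}$ complementary pairs $(F,[k+\ell]\setminus F)$ contributes at most one member to $\mathcal{F}\cup\mathcal{G}$.

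For the inductive step, assume $n>k+\ell$ and $\ell\ge s+2$, and split each family by whether or not it contains $n$. The restrictions $\mathcal{F}(\bar n)\subseteq\{A\in\binom{[n-1]}{k}:|A\cap[m]|\ge s+1\}$ and $\mathcal{G}(\bar n)\subseteq\{B\in\binom{[n-1]}{\ell}:|B\cap[m]|\ge s+1\}$ are cross-intersecting, so the induction hypothesis applied on ground set $[n-1]$ with parameters $(k,\ell,m)$ yields
\[
|\mathcal{F}(\bar n)|+|\mathcal{G}(\bar n)|\le\binom{n-1}{k}-\sum_{i=0}^{s}\binom{m}{i}\binom{n-1-m}{k-i}.
\]
Since $n>m$, the traces $\mathcal{F}(n),\mathcal{G}(n)$ still lie in the correct constraint families with uniformities $k-1,\ell-1$, and the same shifting argument used in Lemma \ref{S4} (which only needs room $|F_1\cup G_1\cup\{n\}|\le k+\ell-1\le n-1$) shows they remain cross-intersecting. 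Applying the induction hypothesis with parameters $(k-1,\ell-1,m)$ on $[n-1]$---valid because $k-1\ge\ell-1\ge s+1$ and $m>(\ell-1)+s$---gives
\[
|\mathcal{F}(n)|+|\mathcal{G}(n)|\le\binom{n-1}{k-1}-\sum_{i=0}^{s}\binom{m}{i}\binom{n-1-m}{k-1-i}.
\]
Summing the two bounds and applying Pascal's identity in each of $\binom{n-1}{k}+\binom{n-1}{k-1}=\binom{n}{k}$ and $\binom{n-1-m}{k-i}+\binom{n-1-m}{k-1-i}=\binom{n-m}{k-i}$ produces the theorem.

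The main obstacle is organising the induction so that every side condition is respected at each descent: one must verify that shifting preserves both the heavy-intersection constraints and the cross-intersecting property, that the strict inequality $m>\ell+s$ is exactly what wipes out the summation in the $n=k+\ell$ base case, and that the descent $\ell\mapsto\ell-1$ never leaves the admissible range $\ell\ge s+1$, which forces $\ell=s+1$ to be treated as a separate base case handled by Lemma \ref{s51} rather than recursively.
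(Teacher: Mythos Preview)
Your proof is correct and follows essentially the same route as the paper: shift, then induct by peeling off the element $n$, with Lemma \ref{s51} handling the $\ell=s+1$ base and the cross-intersecting complement argument handling $n=k+\ell$. The only difference is that the paper additionally singles out the case $k=\ell$ and dispatches it via Theorem \ref{main2} with $t=0$, whereas your induction absorbs that case seamlessly (the descent $(k,\ell)\mapsto(k-1,\ell-1)$ preserves $k\ge\ell$), so your version is marginally more self-contained.
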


\begin{proof}
If $m\geq n$, then Lemma \ref{F87} implies that 
$
|\mathcal{F}|+|\mathcal{G}|\leq \binom{n}{k},
$
as required.
In what follows, we assume that $m<n$.
As mentioned early,  the shifting operation maintains the properties
$|F\cap[m]|\geq s+1$ and $|G\cap[m]|\geq s+1$. Combining with Lemma \ref{S3}, we may assume that $\mathcal{F}$ and $\mathcal{G}$ are shifted.
For fixed $m$,  we are ready to apply induction on $n$ and $k$ and $\ell$.

For the case $\ell = s+1$, it follows from Lemma \ref{s51}.

For the case $k=\ell$, it follows from 
Theorem \ref{main2} by setting $t=0$.

For the case $n=k+\ell$, we may assume that $m< k+\ell$. 
Since $k+\ell-m<k-s $, the bound in Theorem \ref{main4} is $
\binom{k+\ell}{k}$, which follows immediately,  since $\mathcal{F}$ and $\mathcal{G}$ are cross-intersecting.

Now assume that $k >\ell> s+1$ and $n > k+\ell$. Clearly, we have 
\[ \mathcal{F}(\bar{n})\subseteq\left\{A\in \binom{[n-1]}{k}: |A\cap[m]|\geq s+1\right\}, \]
 and 
\[ \mathcal{G}(\bar{n})\subseteq\left\{B\in \binom{[n-1]}{\ell}: |B\cap[m]|\geq s+1\right\}. \] 
Note that $\mathcal{F}(\bar{n})$ and 
$\mathcal{G}(\bar{n})$ are cross-intersecting. 
By induction hypothesis, we get 
\begin{align}\label{f43}
|\mathcal{F}(\bar{n})|+|\mathcal{G}(\bar{n})| \leq\binom{n-1}{k}-\sum_{i=0}^s\binom{m}{i}\binom{n-1-m}{k-i}.
\end{align}
Since $n>m$ and $k >\ell> s+1$, we have 
$
\mathcal{F}(n)\subseteq\left\{A^*\in \binom{[n-1]}{k-1}: |A^*\cap[m]|\geq s+1\right\}
$
and
$
\mathcal{G}(n)\subseteq\left\{B^*\in \binom{[n-1]}{\ell-1}: |B^*\cap[m]|\geq s+1\right\}.
$
By Lemma \ref{S4}, $\mathcal{F}(n)$ and $\mathcal{G}(n)$ are cross-intersecting.
Note that $m>\ell-1+s$. By induction hypothesis, we get
\begin{align}\label{f44}
|\mathcal{F}(n)|+|\mathcal{G}(n)| \leq\binom{n-1}{k-1}-\sum_{i=0}^s\binom{m}{i}\binom{n-1-m}{k-1-i}.
\end{align}
The inequalities (\ref{f43}) and (\ref{f44}) together imply Theorem \ref{main4}.
\end{proof}

With the help of  Theorem \ref{main4}, we can now prove Theorem \ref{main6}.\vspace{3mm}

\noindent{\bf Proof of Theorem \ref{main6}.} 
First of all, we claim that
$$\mathcal{F} \subseteq\left\{A\in \binom{[n]}{k}: |A\cap[\ell+s]|\geq s+1\right\},~\mathcal{G} \subseteq\left\{B\in \binom{[n]}{\ell}: |B\cap[\ell+s]|\geq s+1\right\}.$$
Suppose that there exists $F_0\in \mathcal{F}$ such that $|F_0\cap[\ell+s]|\leq s$. Since $\binom{[\ell+s]}{\ell}\subseteq\mathcal{G}$, we can select $G_0\in\mathcal{G}$ such that $F_0\cap G_0=\emptyset$,  contradicting with that $\mathcal{F}$ and $\mathcal{G}$ are cross-intersecting. In addition, suppose that there exists $G_1\in \mathcal{G}$ such that $|G_1\cap[\ell+s]|\leq s$. Since $\binom{[\ell+s]}{\ell}\subseteq\mathcal{G}$, we can select $G_2\in\mathcal{G}$ such that $G_1\cap G_2=\emptyset$,  contradicting with that $\mathcal{G}$ is intersecting. 

As mentioned early,  the shifting operation maintains the properties  
$|F\cap[\ell+s]|\geq s+1$, $|G\cap[\ell+s]|\geq s+1$ and $\binom{[\ell+s]}{\ell}\subseteq\mathcal{G}$. Combining with Lemma \ref{S3}, we may assume that $\mathcal{F}$ and $\mathcal{G}$ are shifted. 
Next, we are ready to apply induction on $n$ and $k$ and $\ell$.

For the case $n=k+\ell$,  the upper bound in Theorem \ref{main6} is $
\binom{k+\ell}{k}$, which simply follows from the cross-intersecting property of $\mathcal{F}$ and $\mathcal{G}$.

For the case $\ell = s+1$, we have $\mathcal{G} \subseteq\binom{[2s+1]}{s+1}$. It follows from $\binom{[2s+1]}{s+1}\subseteq\mathcal{G}$ that $\mathcal{G} =\binom{[2s+1]}{s+1}$. This, together with $ \mathcal{F} \subseteq\left\{A\in \binom{[n]}{k}: |A\cap[2s+1]|\geq s+1\right\}$, implies the result.

For the case $k=\ell$, it follows from the result in Conjecture \ref{co1} by setting $t=0$.

From now on, let us assume that $k >\ell> s+1$ and $n > k+\ell$. Note that
$\mathcal{F}(\bar{n})\subseteq\left\{A\in \binom{[n-1]}{k}: |A\cap[\ell+s]|\geq s+1\right\}$ and $\mathcal{G}(\bar{n})\subseteq\left\{B\in \binom{[n-1]}{\ell}: |B\cap[\ell+s]|\geq s+1\right\}$ are cross-intersecting.  In addition, $\mathcal{G}(\bar{n})$ is intersecting and $\binom{[\ell+s]}{\ell}\subseteq\mathcal{G}(\bar{n})$.
By induction hypothesis, we get
\begin{align}\label{f45}
|\mathcal{F}(\bar{n})|+|\mathcal{G}(\bar{n})| \leq\binom{\ell+s}{\ell}+\binom{n-1}{k}-\sum_{i=0}^s\binom{\ell+s}{i}\binom{n-\ell-s-1}{k-i}.
\end{align}
Since $n>\ell+s$ and $k >\ell> s+1$, we have 
$
\mathcal{F}(n)\subseteq\left\{A^*\in \binom{[n-1]}{k-1}: |A^*\cap[\ell+s]|\geq s+1\right\}
$
and
$
\mathcal{G}(n)\subseteq\left\{B^*\in \binom{[n-1]}{\ell-1}: |B^*\cap[\ell+s]|\geq s+1\right\}.
$
By Lemma \ref{S4}, $\mathcal{F}(n)$ and $\mathcal{G}(n)$ are cross-intersecting. Since $\ell+s>\ell-1+s$, by Theorem \ref{main4}, we get
\begin{align}\label{f46}
|\mathcal{F}(n)|+|\mathcal{G}(n)| \leq\binom{n-1}{k-1}-\sum_{i=0}^s\binom{\ell+s}{i}\binom{n-\ell-s-1}{k-i-1}.
\end{align}
The inequalities (\ref{f45}) and (\ref{f46}) together imply Theorem \ref{main6}.
$\hfill \square$\vspace{3mm}

\section*{Declaration of competing interest}
We declare that we have no conflict of interest to this work.

\section*{Data availability}
No data was used for the research described in the article.

\section*{Acknowledgement}
Lihua Feng was supported by 
the NSFC grant (Nos. 12271527 and 12471022). 
Yongtao Li was supported by the Postdoctoral Fellowship Program of CPSF (No. GZC20233196). 
This work was also partially supported by the NSF of Hunan Province (2023JJ30180) and the NSFC grant (No. 12201202). 
This paper is equally contributed. 
The authors would like to express their sincere thanks to the referees for the valuable suggestions, which greatly improved the presentation of the manuscript.


\begin{thebibliography}{99}

\bibitem{BW2024}
D. Bulavka, R. Woodroofe, 
A short proof of the Hilton-Milner Theorem, 
(2024), arXiv:2411.02513. 

\bibitem{CKLT2023}
S. Cambie, J. Kim, H. Liu, T. Tran, 
A proof of Frankl's conjecture on cross-union families, 
Combin. Theory 3  (2023), No. 9. 


\bibitem{CLLW2022}
M. Cao, M. Lu, B. Lv, K. Wang, 
Nearly extremal non-trivial cross $t$-intersecting families and $r$-wise $t$-intersecting families, 
European J. Combin. 120 (2024), No. 103958. 


\bibitem{Ellis2021}
D. Ellis, 
Intersection problems in extremal combinatorics: Theorems, techniques and questions old and new,  in: Surveys Combinatorics 2022, Lond. Math. Soc. Lect. Note Ser. 481
(2022) 115--173. 

\bibitem{E61}P. Erd\H{o}s, C. Ko, R. Rado, Intersection theorems for systems of finite sets, Q. J. Math. Oxford 2 (1961) 313--320.



\bibitem{F87}P. Frankl, Erd\H{o}s-Ko-Rado theorem with conditions on the maximal degree, J. Combin. Theory Ser. A 46 (1987) 252--263.




\bibitem{F16}P. Frankl, New inequalities for cross-intersecting families, Mosc. J. Comb. Number Theory 6 (2016) 27--32.

\bibitem{F171}P. Frankl, A stability result for the Katona theorem, J. Combin. Theory Ser. B 122 (2017) 869--876.

\bibitem{F20}P. Frankl, Erd\H{o}s-Ko-Rado Theorem for a restricted universe,
Electron. J. Combin. 27 (2020), \#P2.18.

\bibitem{Fra2024}
 P. Frankl, 
 On the maximum of the sum of the sizes of non-trivial cross-intersecting families, 
 Combinatorica 44  (2024) 15--35. 

\bibitem{FK2017}
P. Frankl, A. Kupavskii, A size-sensitive inequality for cross-intersecting families,  European J. Combin. 62 (2017) 263--271. 

\bibitem{FT92}
P. Frankl, N. Tokushige, Some best possible inequalities concerning cross-intersecting families, J. Combin. Theory Ser. A 61 (1992) 87--97.

\bibitem{F98}
P. Frankl, N. Tokushige, 
Some inequalities concerning cross-intersecting families, Combin. Probab. Comput. 7 (1998) 247--260. 

\bibitem{FT16}P. Frankl, N. Tokushige, Invitation to intersection problems for finite sets, J. Combin. Theory, Ser. A 144 (2016) 157--211.

\bibitem{FW2023}
P. Frankl, J. Wang, 
A product version of the Hilton-Milner theorem,  J. Combin. Theory Ser. A 200 (2023), No. 105791. 

\bibitem{F24}
P. Frankl, J. Wang, Improved bounds on the maximum diversity of intersecting families, European J. Combin. 118 (2024), No. 103885. 



\bibitem{G16}
C. Godsil, K. Meagher, Erd\H{o}s-Ko-Rado Theorems: Algebraic Approaches, in: Volume 149 of Cambridge Studies in Advanced Mathematics,  Cambridge University Press, Cambridge, 2016.

\bibitem{G23} P. Gupta, Y. Mogge, S. Piga, and B. Sch\"ulke, $r$-cross $t$-intersecting families via necessary intersection points, Bull. Lond. Math. Soc.  55 (2023) 1447--1458.

\bibitem{H17}J. Han, Y. Kohayakawa, The maximum size of a non-trivial intersecting uniform family that is not a subfamily of the Hilton-Milner family, Proc. Amer. Math. Soc. 145 (2017) 73--87.

\bibitem{H76}
A. Hilton, The Erd\H{o}s-Ko-Rado theorem with valency conditions, Unpublished Manuscript, 1976.

\bibitem{H67} A. Hilton, E. Milner, Some intersection theorems for systems of finite sets, Q. J. Math. 18 (1967) 369--384.



\bibitem{HZ2017} 
H. Huang, Y. Zhao, 
Degree versions of the Erd\H{o}s-Ko-Rado theorem 
and Erd\H{o}s hypergraph matching conjecture, 
J. Combin. Theory Ser. A 150 (2017) 233--247. 

\bibitem{Huang2019}
H. Huang, Two extremal problems on intersecting families, European J. Combin. 76 (2019) 1--9. 


\bibitem{H24} Y. Huang, Y. Peng, Stability of intersecting families, European J. Combin. 115 (2024), No. 103774.



\bibitem{K64}G. Katona, Intersection theorems for systems of finite sets, Acta Math. Hungar. 15 (1964) 329--337.


\bibitem{KM17}A. Kostochka, D. Mubayi, The structure of large intersecting families, Proc. Amer. Math. Soc. 145 (2017) 2311--2321. 


\bibitem{K18}A. Kupavskii, Structure of non-trivial intersecting families, (2024), arXiv:2412.07974.

\bibitem{KZ2018} 
A. Kupavskii, D. Zakharov, 
Regular bipartite graphs and intersecting families, J. Combin.
Theory Ser. A 155 (2018) 180--189.


\bibitem{LW2024} 
Y. Li, B. Wu, Stabilities for non-uniform $t$-intersecting families, 
Electron. J. Combin. 31 (4) (2024), \#P4.3.


\bibitem{L13}W. Li, B. Chen, K. Huang, K. Lih, Intersecting 
$k$-uniform families containing all the $k$-subsets of a given set, 
Electron. J. Combin. 20 (2013), \#P38.

\bibitem{SFQ2022}
C. Shi,  P. Frankl, J. Qian, 
 On non-empty cross-intersecting families, 
 Combinatorica 42 (2022) 1513--1525.


\bibitem{WZ2013}
J. Wang, H. Zhang, 
Nontrivial independent sets of bipartite graphs and cross-intersecting families,
J. Combin. Theory Ser. A 120 (2013) 129--141.


\bibitem{W23}B. Wu, A refined result on cross-intersecting families, Discrete Appl. Math. 339 (2023) 149--153.

\bibitem{Wu-first}
Y. Wu, Y. Li, L. Feng, J. Liu, G. Yu, 
Stabilities of intersecting families revisited, (2024), arXiv:2411.03674.   

\bibitem{Wu-second}
Y. Wu, Y. Li, L. Feng, J. Liu, G. Yu, 
Stabilities of Kleitman diameter theorem, 
(2024), arXiv:2411.08325. 

\bibitem{Wu-third}
Y. Wu,  L. Feng, Y. Li, 
A result for hemi-bundled cross-intersecting families, 
(2024), arXiv:2411.08546. 

\end{thebibliography}
\end{document}